\title{Global existence and blow-up of  solutions to some quasilinear wave equation in one space dimension}
 \author{YUUSUKE SUGIYAMA\footnote{sugiyama@ma.kagu.tus.ac.jp,\ \mbox{Department of Mathematics,\ Tokyo University of Science}}}
\date{}
\theoremstyle{definition} 
\newtheorem{Def}{deffinition}
\newtheorem{Prop}[Def]{Proposition}
\newtheorem{Lemma}[Def]{Lemma}
\newtheorem{Thm}[Def]{Theorem}
\newtheorem{Remark}[Def]{Remark}
\newcommand{\R}{\mathbb{R}} 
\begin{document}
\maketitle 
\begin{abstract}
We consider the global existence and blow up  of solutions of the Cauchy problem of the  quasilinear wave equation:
$\partial_{t}^2 u =  \partial_x( c(u)^2 \partial_x u)$, which has richly physical
backgrounds. 
Under the assumption that $c(u(0,x))\geq \delta$ for some $\delta>0$, we give sufficient conditions for the existence of global smooth solutions  and  the occurrence of two types of blow-up  respectively. One of the two types is that $L^{\infty}$-norm of $\partial_t u$ or $\partial_x u$ goes up to the infinity. The other type is that $c(u)$ vanishes, that is, the equation degenerates.
\end{abstract}

\section{Introduction}
In this paper,\ we consider the Cauchy problem of the following wave equation: 
\begin{eqnarray} 
\left\{  \begin{array}{ll} \label{eq1}
  \partial_{t}^2 u =  \partial_x( c(u)^2 \partial_x u), \ \ (t,x) \in (0,T] \times \R, \\   
  u(0,x) = u_0 (x),\ \ x \in \R,                \\              
    \partial_t u(0,x) = u_1 (x), \ \ x \in \R, 
\end{array} \right. 
\end{eqnarray}
where $u(t,x)$ is an unknown real valued function. The equation in (\ref{eq1}) has some physical backgrounds including  vibrations of a  string.

We assume that $c \in C^{\infty}((\theta_0, \infty ))$ for some $\theta_0 \in [-\infty, 0)$  satisfies that
\begin{eqnarray} 
&& \lim_{\theta \searrow \theta_0} c(\theta)=0 , \label{con3} \\
&& c(\theta )>0  \ \  \mbox{for all}  \ \theta>\theta_0 , \label{con4} \\
&& c'(\theta ) \geq 0 \ \  \mbox{for }  \ \theta >  \theta_0. \label{con2} 
\end{eqnarray} 

We denote  Sobolev space $(1-\partial^2 _x)^{-\frac{s}{2}}L^2(\R)$  for $s\in \R$  by $H^s (\R)$. For a Banach space $X$, $C^j ([0,T];X)$  denotes the set of functions $f:[0,T] \rightarrow X$ such that $f(t)$ and its $k$ times derivatives for $k=1,2,\ldots , j$
 are continuous.  $L^{\infty} ([0,T];X)$ denotes the set of functions $f:[0,T] \rightarrow X$ such that 
 the norm $\| f\|_{L^{\infty} ([0,T];X)}:=\mbox{ess.}\sup_{[0,T]}\| f(t)\|_X$ is finite. Various positive constants are simply denoted by $C$.

By dividing the both side of $(\ref{eq1})$ by $c(u(t,x))^2$,\ $(\ref{eq1})$ is formed to  
\begin{eqnarray} \label{st}
\dfrac{1}{c(u(t,x))^2}\partial^2_t u(t,x)  -\partial^2 _x u(t,x) =\dfrac{2c'(u(t,x))(\partial _x u(t,x))^2}{c(u(t,x))}.
\end{eqnarray}
Since the left hand side of $(\ref{st})$ has a singularity at $u=\theta_0$,\ we call a solution $u$ to hyperbolic 
equation $(\ref{eq1})$ to blow up, when 
\begin{eqnarray}\label{type1}
\varlimsup_{t\nearrow T}( \| \partial_t u(t)\|_{L^{\infty}}+\|\partial_x u(t) \|_{L^{\infty}})=\infty,
\end{eqnarray}
or
\begin{eqnarray}\label{type2}
\lim_{t\nearrow T} \inf_{(s,x) \in [0,t]\times \R}u(s,x)=\theta_0,
\end{eqnarray}
 occurs in finite time $T>0$ under the assumption that $u(0,x)\geq \delta$ for some positive constant $\delta$.
The blow up criterion  $(\ref{type1})$ and $(\ref{type2})$ of  some class of hyperbolic systems including $(\ref{eq1})$ is introduced in the textbooks of  Majda \cite{m} and Alinhac \cite{al}. 
 The aim of this paper is to obtain sufficient conditions for the global existence of solutions   and  the occurrence of the blow-up  phenomena $(\ref{type1})$ and $(\ref{type2})$  in finite time  respectively.

We denote the blow up time of the  solution $u$ of the Cauchy problem  (\ref{eq1})  by $T^{*} $,
that is, 
\begin{eqnarray*}
T^{*} :=\sup \{ \ T> 0 \ | \ \sup_{[0,T]} \{ \| \partial_t u(t)\|_{L^{\infty}}+\|\partial_x u(t) \|_{L^{\infty}} \} <\infty ,\ \inf_{[0,T]\times \R} u(t,x) >\theta_0 \ \}.
\end{eqnarray*}

\begin{Thm}\label{mt1} \it
Let  $c(\cdot ) \in C^{\infty}((\theta_0, \infty))$  and initial data $(u_0, u_1) \in H^{s+1}(\R) \times H^{s}(\R)$ for $s>\frac{1}{2}$. Suppose $c(\theta)$ and $(u_0, u_1)$ satisfy  $(\ref{con3})$, $(\ref{con4})$,  $(\ref{con2})$ and 
\begin{eqnarray} 
&& u_0 (x)> \theta_0  \ \ \mbox{for} \  x \in \R ,\label{inicon1} \\
&&u_1 (x) \pm c(u_0 (x)) \partial_x u_0 (x) \leq 0 \ \ \mbox{for} \  x \in \R ,\label{inicon2} \\
&&  -\int_{\R} u_1(x) dx<\int_{\theta_0} ^{0} c(\theta ) d\theta .\label{inicon3} 
\end{eqnarray} 
Then $(\ref{eq1})$ has a unique global solution such that 
$
u \in \bigcap_{j=0,1,2} C^{j}([0, \infty);H^{s-j+1} (\R)).
$
\end{Thm}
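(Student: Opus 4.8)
The plan is to combine the standard local well-posedness theory for quasilinear wave equations with a priori bounds that rule out both blow-up mechanisms $(\ref{type1})$ and $(\ref{type2})$ on the maximal existence interval $[0,T^{*})$. Local existence of a solution $u \in \bigcap_{j=0,1,2}C^j([0,T];H^{s-j+1}(\R))$ for small $T$, together with the continuation criterion that the solution persists as long as $\|\partial_t u\|_{L^\infty}+\|\partial_x u\|_{L^\infty}$ stays finite and $\inf u$ stays above $\theta_0$, is classical (it is precisely the blow-up criterion of Majda \cite{m} and Alinhac \cite{al} cited above). Thus it suffices to produce, on $[0,T^{*})$, a uniform lower bound $\inf u \ge \underline{u} > \theta_0$ together with uniform control of the first derivatives; the theorem then follows by contradiction with the definition of $T^{*}$.

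First I would introduce the Riemann invariants $R_\pm := \partial_t u \pm c(u)\partial_x u$, so that hypothesis $(\ref{inicon2})$ reads $R_\pm(0,\cdot)\le 0$. A direct computation using $(\ref{eq1})$ shows that they satisfy the transport equations
\begin{eqnarray*}
(\partial_t \mp c(u)\partial_x)R_\pm = \pm\frac{c'(u)}{2c(u)}(R_+ - R_-)R_\pm .
\end{eqnarray*}
Because each right-hand side is linear and homogeneous in $R_\pm$, integrating along the corresponding characteristic gives $R_\pm$ as its initial value times a positive exponential factor, so the sign of each invariant is preserved: $R_\pm \le 0$ on all of $[0,T^{*})\times\R$. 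Two consequences are immediate: adding the invariants yields $\partial_t u = \tfrac12(R_++R_-)\le 0$, so $u$ is nonincreasing in $t$ and $\sup u \le \sup u_0$; moreover $R_\pm\le 0$ forces $\partial_t u \le -c(u)|\partial_x u| = -|\partial_x \Phi(u)|$, where $\Phi(\theta):=\int_{\theta_0}^\theta c(\xi)\,d\xi$ is strictly increasing with $\Phi(\theta_0)=0$ by $(\ref{con3})$.

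The crucial step is the lower bound for $u$, where hypothesis $(\ref{inicon3})$ enters. Integrating $(\ref{eq1})$ in $x$ shows that $\int_\R \partial_t u(t,x)\,dx \equiv \int_\R u_1\,dx =: m$ (the flux $c(u)^2\partial_x u$ vanishes at spatial infinity), and $(\ref{inicon2})$ gives $m\le 0$. Integrating the pointwise inequality $\partial_t u \le -|\partial_x\Phi(u)|$ over $x$ then bounds the total variation of $\Phi(u(t,\cdot))$:
\begin{eqnarray*}
\int_\R |\partial_x \Phi(u(t,x))|\,dx \le -\int_\R \partial_t u\,dx = -m .
\end{eqnarray*}
Since $\Phi(u)\to \Phi(0)=\int_{\theta_0}^0 c(\theta)\,d\theta$ as $x\to\pm\infty$, this total-variation bound controls how far $\Phi(u)$, hence $u$, can drop: $\inf_x \Phi(u(t,\cdot)) \ge \Phi(0)+m$, which is strictly positive exactly by $(\ref{inicon3})$. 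Therefore $\inf_{[0,T^{*})\times\R} u \ge \Phi^{-1}(\Phi(0)+m) =: \underline{u} > \theta_0$, a uniform bound away from degeneracy, so $u$ takes values in a compact subset of $(\theta_0,\infty)$ on which $c$ is smooth, bounded, and bounded below by some $\delta'>0$; this rules out $(\ref{type2})$.

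It remains to rule out $(\ref{type1})$. Since $\partial_t u=\tfrac12(R_++R_-)$ and $\partial_x u = \tfrac{1}{2c(u)}(R_+-R_-)$ with $c(u)\ge\delta'$, it is enough to bound $\|R_\pm\|_{L^\infty}$, and here I would use an invariant-region argument. Setting $N_0:=\max(\|R_+(0)\|_{L^\infty},\|R_-(0)\|_{L^\infty})$ and evaluating the transport equations above on the four faces of the rectangle $[-N_0,0]^2$ in the $(R_+,R_-)$-plane, one checks that the right-hand sides (now bounded, since $c\ge\delta'$ and $c'\ge0$) are inward-pointing or tangent on each face; hence the rectangle is invariant and $\|R_\pm(t)\|_{L^\infty}\le N_0$ for all $t\in[0,T^{*})$, giving uniform control of $\partial_t u$ and $\partial_x u$. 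With both blow-up alternatives excluded on $[0,T^{*})$ by uniform bounds, the continuation criterion forces $T^{*}=\infty$, and feeding these $L^\infty$ bounds and the uniform hyperbolicity $c(u)\ge\delta'$ into the standard $H^{s+1}$ energy estimate closes the higher-order norms on every finite interval, yielding the stated regularity. I expect the main obstacle to be the lower-bound step: recognizing that $(\ref{inicon2})$ yields the differential inequality $\partial_t u \le -|\partial_x\Phi(u)|$ and converting the conserved mass $m$ into a total-variation bound on $\Phi(u)$ is precisely what makes $(\ref{inicon3})$ the sharp condition preventing degeneracy.
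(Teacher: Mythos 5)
Your proposal is correct and follows the same skeleton as the paper's proof: your sign-preservation step for $R_\pm$ is exactly Lemma \ref{zz1} (proved there via uniqueness of the ODE along characteristics, equivalently your exponential-factor formula), and your lower-bound step is exactly Lemma \ref{zz3} --- your $\Phi(u)\ge\Phi(0)+m$ is the paper's estimate $\bigl|\int_0^{u(t,x)}c(\theta)\,d\theta\bigr|\le-\int_\R u_1\,dx<\int_{\theta_0}^0c(\theta)\,d\theta$, using the same conservation law $\frac{d}{dt}\int_\R\partial_t u\,dx=0$ and the same pointwise inequality $|c(u)\partial_x u|\le-\partial_t u$. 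The one genuine divergence is how you bound $\|R_\pm\|_{L^\infty}$: the paper follows Zhang--Zheng and proves the $L^p$ estimate $\|R_1(t)\|_{L^p}^p+\|R_2(t)\|_{L^p}^p\le\|R_1(0)\|_{L^p}^p+\|R_2(0)\|_{L^p}^p$ (Lemma \ref{zz2}, exploiting $c'\ge0$ and the sign conditions to make the integrated right-hand side nonpositive) and then lets $p\to\infty$ (Lemma \ref{zz22}), whereas you run an invariant-region argument on the rectangle $[-N_0,0]^2$. Your field computation is right --- on the face $R_+=-N_0$ with $R_-\in[-N_0,0]$ the right-hand side $\frac{c'}{2c}R_+(R_+-R_-)$ is nonnegative, so the field points inward or is tangent --- and it even yields the sharper bound $\max\|R_\pm(t)\|_{L^\infty}\le N_0$ rather than the paper's factor-of-two estimate. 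The trade-off is rigor at the boundary: because the two invariants are transported along different characteristic families and the field is only tangent (not strictly inward) on the faces $R_\pm=0$ and at the corner, the claim ``hence the rectangle is invariant'' needs a bootstrap/first-exit-time argument (or an $\epsilon$-enlarged rectangle with strict inequalities), including attention to whether spatial suprema are attained; the paper's integral $L^p$ method is precisely a way of implementing this maximum principle that sidesteps those pointwise subtleties. Both routes close the argument the same way, via the Majda/Taylor continuation criterion (Proposition \ref{lwp}).
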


\begin{Thm}\label{mt2} \it
Let $\theta_0 \not=-\infty$. Under the same assumption as in Theorem $\ref{mt1}$ without $(\ref{inicon3})$, we assume that
\begin{eqnarray} 
&&\mbox{\rm supp\it } \, u_0,\ \mbox{\rm supp\it} \, u_1 \subset [-K,K] \ \ \mbox{for some} \ K>0, \label{inicon4}\\
&& -\int_{\R} u_1(x) dx  > -2\theta_0 c(0) .\label{inicon5}  
\end{eqnarray} 
 Then  $T^{*} < \infty$ and  the solution  $u \in \bigcap_{j=0,1,2} C^{j}([0, T^*);H^{s-j+1} (\R))$ of $(\ref{eq1})$ satisfies that  
\begin{eqnarray*}
\lim_{t\nearrow T^{*} } u(t,x_0)=\theta_0  \ \ \mbox{for some} \ x_0 \in \R.
\end{eqnarray*}
\end{Thm}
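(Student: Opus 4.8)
The plan is to exploit the one-dimensional structure through the Riemann invariants $R := \partial_t u + c(u)\partial_x u$ and $S := \partial_t u - c(u)\partial_x u$. A direct computation using $(\ref{eq1})$ shows that they satisfy the transport equations
\begin{equation*}
\partial_t R - c(u)\partial_x R = c'(u)\,\partial_x u\, R,\qquad \partial_t S + c(u)\partial_x S = -c'(u)\,\partial_x u\, S,
\end{equation*}
each of which is linear and homogeneous in $R$ (resp.\ $S$) along the characteristic $\frac{dx}{dt}=-c(u)$ (resp.\ $\frac{dx}{dt}=+c(u)$). Since $(\ref{inicon2})$ says precisely $R(0,\cdot)\le0$ and $S(0,\cdot)\le0$, integrating these linear ODEs along characteristics preserves the sign, so $R\le0$ and $S\le0$ on $[0,T^*)\times\R$. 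In particular $\partial_t u=\tfrac12(R+S)\le0$, so $u$ is nonincreasing in $t$, and $c(u)|\partial_x u|=\tfrac12|R-S|\le|\partial_t u|$.

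The technical engine is an a priori bound ruling out type-$(\ref{type1})$ blow-up. Writing $\rho:=-R\ge0$, $\sigma:=-S\ge0$, the transport equations become $\frac{d}{dt}\rho=\frac{c'}{2c}(\sigma-\rho)\rho$ along $\frac{dx}{dt}=-c$ and $\frac{d}{dt}\sigma=\frac{c'}{2c}(\rho-\sigma)\sigma$ along $\frac{dx}{dt}=c$. At a spatial point realizing the larger of the two maxima (say $\rho$ attains $\|R\|_{L^\infty}\ge\|S\|_{L^\infty}$ there), the factor $\sigma-\rho$ is $\le0$ by $(\ref{con4})$ and $(\ref{con2})$, so the upper Dini derivative of $\Phi(t):=\max(\|R(t)\|_{L^\infty},\|S(t)\|_{L^\infty})$ is $\le0$. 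Hence $\Phi$ is nonincreasing, giving $\|\partial_t u\|_{L^\infty}\le\Phi(0)$ on $[0,T^*)$ and, via $c(u)|\partial_x u|\le|\partial_t u|$ and the monotonicity of $c$, the bound $\|\partial_x u\|_{L^\infty}\le\Phi(0)/c(\inf_x u)$. Thus the derivatives can blow up only through $c(\inf u)\to0$, i.e.\ only if $\inf u\to\theta_0$; equivalently, if $\inf_{[0,T^*)\times\R}u>\theta_0$ the solution extends past $T^*$.

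To show $T^*<\infty$ I would combine finite propagation speed with the monotonicity $\partial_t u\le0$. Since $u_0\equiv0$ outside $[-K,K]$ by $(\ref{inicon4})$, monotonicity gives $u\le0$ for $|x|>K$, hence the wave speed $c(u)\le c(0)$ there, so $\mathrm{supp}\,u(t,\cdot)\subseteq[-K-c(0)t,\,K+c(0)t]=:[-L(t),L(t)]$. Integrating $(\ref{eq1})$ in $x$ and using compact support kills the boundary terms, so $\frac{d}{dt}\int_\R\partial_t u\,dx=0$ and $\int_\R u(t)\,dx=\int_\R u_0\,dx-Mt$ with $M:=-\int_\R u_1\,dx$. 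On the other hand $u>\theta_0$ forces $\int_{-L(t)}^{L(t)}u\,dx\ge2L(t)\theta_0$, whence $(M+2c(0)\theta_0)\,t\le\int_\R u_0\,dx-2K\theta_0$. Assumption $(\ref{inicon5})$ is exactly $M>-2c(0)\theta_0$, so the coefficient on the left is positive and the inequality must fail for large $t$: therefore $T^*<\infty$. Together with the previous paragraph, the finite-time blow-up must be of degenerate type, i.e.\ $\inf_{[0,T^*)\times\R}u=\theta_0$.

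The main obstacle is the final localization step: upgrading ``$\inf u\to\theta_0$'' to ``$u(t,x_0)\to\theta_0$ at a single fixed $x_0$.'' I would let $x(t)$ be a minimizer of $u(t,\cdot)$, which exists in the fixed compact set $[-L(T^*),L(T^*)]$ precisely because $T^*<\infty$; since $\partial_x u(t,x(t))=0$, the value $\mu(t):=\min_x u(t,x)$ satisfies $\frac{d}{dt}\mu=\partial_t u(t,x(t))$ with $|\frac{d}{dt}\mu|\le\Phi(0)$, so $\mu$ is Lipschitz, nonincreasing, and $\mu(t)\to\theta_0$. Extracting $x(t_n)\to x_0$ and using that $t\mapsto u(t,x_0)$ is nonincreasing produces a candidate $x_0$. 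The delicate point is that $\partial_x u$ may itself blow up as $c(u)\to0$, so the value at the moving minimizer need not transfer to the fixed point $x_0$ by mere continuity; controlling this spatial transfer---through the Lipschitz bound on $\mu$ together with a finer analysis of where the minimum is located (or of characteristic focusing)---is where the real work lies, and is the step I expect to be hardest.
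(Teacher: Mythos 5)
Your first three paragraphs are correct and follow essentially the paper's own route: sign preservation of the Riemann invariants along characteristics is the paper's Lemma \ref{zz1}; your uniform bound on $\max(\|R(t)\|_{L^\infty},\|S(t)\|_{L^\infty})$ is the paper's Lemma \ref{zz22} (the paper derives it from $L^p$ estimates with $p\to\infty$, Lemma \ref{zz2}, rather than your maximum-principle/Dini-derivative argument, but the conclusion and its use are the same); the dichotomy ``derivatives blow up only through $c(\inf u)\to 0$'' is exactly the paper's second step, via the continuation criterion of Proposition \ref{lwp}; and the proof of $T^*<\infty$ --- finite propagation speed (the paper's Lemma \ref{sd}), conservation of $\int_{\R}\partial_t u\,dx$, linear growth of $-\int_{\R}u\,dx$ against the bound $-2\theta_0(c(0)t+K)$, and assumption (\ref{inicon5}) --- is word for word the paper's argument.

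The genuine gap is the final localization step, which you explicitly leave open. Ironically, you already hold the tool that closes it, and no analysis of the moving minimizer or of characteristic focusing is needed. Your bound $\|\partial_t u(t)\|_{L^\infty}\le\Phi(0)$, valid uniformly on $[0,T^*)$, makes $t\mapsto u(t,x)$ Lipschitz with constant $\Phi(0)$ \emph{uniformly in} $x$; since $T^*<\infty$, the functions $u(t,\cdot)$ are uniformly Cauchy as $t\nearrow T^*$ and hence converge uniformly on $\R$ to a limit $u^*$, which is therefore \emph{continuous}. (This is precisely what your moving-minimizer attempt lacks: a monotone pointwise limit is merely upper semicontinuous, and an upper semicontinuous function need not attain its infimum, so continuity of $u^*$ must come from the uniform-in-$x$ time regularity, not from monotonicity alone.) Now, since $\partial_t u\le 0$, the limit in $t$ is an infimum, and infima commute: $\lim_{t\nearrow T^*}\inf_{(s,y)\in[0,t)\times\R}u(s,y)=\inf_{x\in\R}\inf_{t<T^*}u(t,x)=\inf_{x\in\R}u^*(x)$, so $\inf_{x}u^*(x)=\theta_0$ by your third paragraph; this exchange is the paper's identity (\ref{x1}). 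Finally $u^*$ is continuous, vanishes outside the compact set $[-c(0)T^*-K,\,c(0)T^*+K]$, and $\theta_0<0$ (here $\theta_0\neq-\infty$ is used), so the infimum $\theta_0$ is attained at some $x_0$ in that compact set, and monotonicity in $t$ gives $\lim_{t\nearrow T^*}u(t,x_0)=u^*(x_0)=\theta_0$, which is the conclusion (\ref{con}). The paper's own write-up of this step is terse --- it swaps $\lim$ and $\inf$ by monotonicity and appeals to compact support --- but the uniform Lipschitz bound in time is what makes the compact-support appeal legitimate, and it is available both to the paper (via Lemma \ref{zz22}) and to you (via $\Phi$).
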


 \begin{Thm}\label{mt3} \it
Let  $c \in C^{\infty}((\theta_0, \infty))$  and initial data $(u_0, u_1) \in H^{s+1}(\R) \times H^{s}(\R)\setminus \{ 0 \}$ for $s>\frac{1}{2}$. Suppose $c(\theta)$ and $(u_0, u_1)$ satisfy that there exists a constant $\delta>0$ such that
\begin{eqnarray} 
&& u_0 (x)> \theta_0 \ \ \mbox{for} \  x \in \R ,\label{inicon1-2} \\
&& c'(\theta)>0  \ \  \mbox{for all}  \ \theta > \theta_0, \label{inicon6}\\
&&\mbox{\rm supp\it } \, u_0,\ \mbox{\rm supp\it} \, u_1 \subset [-K,K] \ \ \mbox{for some} \ K>0, \label{inicon7}\\
&&u_1 (x) \pm c(u_0 (x)) \partial_x u_0 (x) \geq 0 \ \ \mbox{for} \  x \in \R .\label{inicon8} 
\end{eqnarray} 
 Then  $T^{*}< \infty$ and  the solution  $u \in \bigcap_{j=0,1,2} C^{j}([0, T^*);H^{s-j+1} (\R))$ of $(\ref{eq1})$ satisfies
\begin{eqnarray*}
\varlimsup_{t\nearrow T^{*}} \| \partial_t u(t)\|_{L^{\infty}}+\|\partial_x u(t) \|_{L^{\infty}}=\infty.
\end{eqnarray*}
\end{Thm}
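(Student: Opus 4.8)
The plan is to pass to the Riemann invariants and to combine a sign-preservation argument, which rules out the degenerate blow-up \eqref{type2} and fixes the direction of the motion, with a characteristic-crossing argument that exploits the genuine nonlinearity \eqref{inicon6}. Writing $v=\partial_t u$, $w=\partial_x u$ and $R_\pm = v \pm c(u)w$, a direct computation would turn \eqref{eq1} into the transport identities
\[
\partial_t R_\pm \mp c(u)\,\partial_x R_\pm = \pm\, c'(u)\,w\,R_\pm .
\]
Thus $R_+$ (resp. $R_-$) obeys a linear homogeneous ODE along the backward (resp. forward) characteristics $\dot x = -c(u)$ (resp. $\dot x = c(u)$). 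Since \eqref{inicon8} says exactly $R_\pm(0,\cdot)\ge 0$, the integrating factor $\exp\big(\pm\int c'(u)w\big)$ — finite as long as $\|\partial_x u\|_{L^\infty}$ stays bounded — keeps $R_\pm\ge 0$ on $[0,T^*)$. In particular $\partial_t u = \tfrac12(R_++R_-)\ge 0$, so $u$ is nondecreasing in $t$ and $u(t,x)\ge u_0(x)>\theta_0$; hence \eqref{type2} cannot occur, and it suffices to prove $T^*<\infty$, the blow-up being then automatically of type \eqref{type1}.

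Next I would record the global bookkeeping. Adding the two inequalities in \eqref{inicon8} gives $u_1\ge 0$, and if $u_1\equiv 0$ then $w_0\equiv 0$, forcing $u_0$ to be a compactly supported constant, i.e. $u_0\equiv 0$, against $(u_0,u_1)\neq 0$; hence $I_1:=\int_\R u_1\,dx>0$. Integrating \eqref{eq1} in $x$ (legitimate by \eqref{inicon7} and finite speed while the solution is smooth) yields $\frac{d}{dt}\int_\R \partial_t u\,dx=0$, so $\int_\R \partial_t u\,dx\equiv I_1$ and therefore $\int_\R u(t,x)\,dx = m_0 + I_1 t$ with $m_0=\int_\R u_0\,dx$.

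The crucial geometric input, and the step I expect to be the main obstacle, is that \emph{while $u$ is smooth} one has $\operatorname{supp} u(t,\cdot)\subseteq[-K-c(0)t,\,K+c(0)t]$. This is delicate precisely because $c(u)>c(0)$ in the interior, so a naive finite-speed bound would let the width grow super-linearly in a way that destroys the estimate below. The point I would argue is that the leading edge sits where $u=0$, and there — by smoothness together with $u\ge u_0=0$ just behind it (valid for $|x|>K$) — one also has $\partial_x u=\partial_t u=0$; the edge is thus a genuine characteristic of speed $c(0)$. Equivalently, the exterior region carries zero Cauchy data and finite propagation speed for \eqref{eq1} forces $u$ to vanish to the right of the forward characteristic issuing from $(0,K)$, which consequently travels at the constant speed $c(0)$.

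Granting this, I would close the argument by a short ODE comparison. The mass identity and the linear width $2K+2c(0)t$ give $\max_x u(t,\cdot)\ge (m_0+I_1t)/(2K+2c(0)t)\to I_1/(2c(0))>0$. If $T^*$ is smaller than the time $t_0$ chosen below there is nothing to prove; otherwise fix $t_0$ and a point $x_1$ with $u(t_0,x_1)\ge\eta:=I_1/(4c(0))>0$. Along the forward characteristic $X_*$ issuing from $(t_0,x_1)$ the value $u$ is nondecreasing, so $u\ge\eta$ and $\dot X_*=c(u)\ge c(\eta)>c(0)$, the last inequality being strict by \eqref{inicon6}. Since $x_1<K+c(0)t_0$, the characteristic $X_*$ starts strictly behind the front $X_0(t)=K+c(0)t$ and gains on it at rate at least $c(\eta)-c(0)>0$, so $X_0(t)-X_*(t)\searrow 0$ at some finite $T_c$. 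As $u(t,X_*(t))\ge\eta$ while $u(t,X_0(t))=0$, the mean value theorem gives $\|\partial_x u(t)\|_{L^\infty}\ge \eta/(X_0(t)-X_*(t))\to\infty$ as $t\nearrow T_c$, so the solution cannot remain smooth up to $T_c$; hence $T^*\le T_c<\infty$ and \eqref{type1} holds. Once the finite-speed statement of the previous paragraph is secured, this final comparison is routine.
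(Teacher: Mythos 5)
Your proposal is correct, but it establishes $T^{*}<\infty$ by a genuinely different mechanism than the paper. You share the paper's first step: sign preservation of the Riemann invariants (your integrating-factor argument is in fact cleaner than the paper's Lemma \ref{zz1}, and your transport identities are the paper's system \eqref{fs} rewritten via $R_1-R_2=2c(u)\partial_x u$), which gives $\partial_t u\geq 0$ and rules out the degenerate blow-up \eqref{type2}. From there the paper stays local: it fixes a backward characteristic $x_{-}$ with $R_1(0,x_{-}(0))>0$, notes that $u$ is nondecreasing along it so $c(u)\geq\delta$; since $R_2$ is transported along forward characteristics and $R_2(0,\cdot)$ is compactly supported, the leftward-moving curve $x_{-}$ escapes the influence region of $\mbox{supp}\,R_2(0,\cdot)$ after a finite time $T_0$, after which $R_2=0$ along $x_{-}$, $u$ is frozen along $x_{-}$, $c$ and $c'$ are pinned between positive constants there, and $R_1$ obeys the Riccati inequality $\frac{d}{dt}R_1\geq CR_1^{2}$, whose finite-time blow-up contradicts $T^{*}=\infty$. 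Your route instead recycles the machinery of the paper's Theorem \ref{mt2}: conservation of $\int_{\R}\partial_t u\,dx$ plus the support bound $\mbox{supp}\,u(t,\cdot)\subset[-K-c(0)t,\,K+c(0)t]$, then the genuine nonlinearity \eqref{inicon6} to make an interior forward characteristic (on which $u\geq\eta$, hence speed $\geq c(\eta)>c(0)$) overtake the front, forcing gradient blow-up by the mean value theorem. The step you flagged as the main obstacle---propagation of support at exactly speed $c(0)$---is precisely the paper's Lemma \ref{sd}, stated under hypotheses \eqref{inicon1} and \eqref{inicon4}, which coincide with your \eqref{inicon1-2} and \eqref{inicon7}, and proved there by citation to Sogge; so you may simply invoke that lemma instead of your heuristic argument that the edge is a characteristic. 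As for what each approach buys: the paper's proof needs neither finite propagation speed nor any information about $\int_{\R}u_1\,dx$, and it identifies exactly which Riemann invariant blows up; yours yields an explicit upper bound on $T^{*}$ in terms of $K$, $\int_{\R}u_1\,dx$ and $c$, locates the catastrophe at the edge of the support, and requires the additional (easy) observation $\int_{\R}u_1\,dx>0$, which you correctly deduce from \eqref{inicon8} and the nontriviality of the data.
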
 


\begin{Remark} \label{rem1}
Let  $c(\cdot  ) \in C^{\infty}(\R)$  and initial data $(u_0, u_1) \in H^{s+1}(\R) \times H^{s}(\R)\setminus \{ 0 \}$ for $s>\frac{1}{2}$.
Suppose that there exists a constant $c_1 >0$ such that  $c(\theta)\geq c_1$ for all $\theta \in \R$ instead of the assumption   $(\ref{con3})$ and $(\ref{con4})$.
If  $(\ref{con2})$ and $(\ref{inicon2})$ hold, then $(\ref{eq1})$ has a unique global solution such that  $u \in \bigcap_{j=0,1,2} C^{j}([0, \infty);H^{s-j+1} (\R))$.
\end{Remark}

\begin{Remark} \label{rem3}
In Theorem \ref{mt1},  if $\theta = -\infty$, then we does not need the assumption (\ref{inicon3}).
\end{Remark}

\begin{Remark} \label{rem2}
The equation in (\ref{eq1}) does not degenerate for the global solution which is constructed by Theorem \ref{mt1}, that is,
the global solution $u$ in Theorem \ref{mt1} satisfies that there exists a constant $\theta_1 >\theta_0$ such that
$$
u(t,x)\geq \theta_1,
$$
for $(t,x) \in [0,\infty)\times \R$.
\end{Remark}
The equation in (\ref{eq1}) has richly physical backgrounds (e.g. the flow of a one dimensional gas, the shallow water waves, the longitudinal wave propagation on a moving threadline,  the dynamics of a finite nonlinear string,  the elastic-plastic materials or  the electromagnetic transmission line). 
  In \cite{al}, Ames, Lohner and Adams study the group  properties of the  equation in (\ref{eq1}) by using the Lie algebra and introduce physical backgrounds.
In \cite{z}, Zabusky introduce the equation 
\begin{eqnarray}  \label{zeqori}
\partial_{t}^2 v =   (1+ \partial_x v)^a \partial^2 _x v,
\end{eqnarray} 
which describes the standing vibrations of a finite, continuous and nonlinear string for $a>0.$ Setting $u=\partial_x v$ for the solution $v$ to (\ref{zeqori}),
$u$ is a solution to the equation: 
\begin{eqnarray}  \label{zeq}
\partial_{t}^2 u =  \partial_x ( (1+  u)^a \partial_x u).
\end{eqnarray} 

In author's previous work \cite{s3},  the author show a global existence theorem for (\ref{eq1}) under some conditions on the function $c$ and initial data.
However, we can not apply the global existence theorem of \cite{s3} to (\ref{zeq}) since the theorem requires  the condition that there exists a constant $c_0>0$ such that 
\begin{eqnarray} \label{pas}
c(\theta)\geq c_0 \ \ \mbox{for all} \ \theta \in \R.
\end{eqnarray} 
Our global existence theorem (Theorem 1)  can yield a global solvability for
some equations including (\ref{zeq}).

Many authors \cite{GHZ, GHZ2, zz1, zz22, ks1, ks2, s3} study the Cauchy problem of the equation
\begin{eqnarray} \label{raeq}
 \partial_{t}^2 u =  c(u)^2 \partial^2 _x u + \lambda c(u)c'(u)(\partial_x u)^2,
\end{eqnarray}
for $ 0\leq \lambda \leq 2$. (\ref{raeq}) with $ \lambda =2$ is the equation in (\ref{eq1}).

Kato and Sugiyama \cite{ks2} and Sugiyama \cite{ks2} show that 
the same theorem as Theorem \ref{mt2} holds for (\ref{raeq}) for $0\leq \lambda  < 2 $ without 
the restriction $\int_{\R} u_1 (x) dx$ (the assumption (\ref{inicon5})). 

The equation in $(\ref{eq1})$ is related to equations 
\begin{eqnarray} \label{eq4}
\partial_t v =\pm c(v) \partial_x v \ \ \mbox{and} \ \  
\partial^2 _t v=c(\partial_x v) \partial^2 _x v.
\end{eqnarray}
In fact, the solution $v$ to the first equation of (\ref{eq4}) is a solution to the equation in   (\ref{eq1}). The function $\partial_x v$ with the 
solution $v$ to the second equation of (\ref{eq4}) is a solution to  the equation in   (\ref{eq1}).  Lax \cite{lax} and John \cite{fj} study the blow up for the first and the second equations of (\ref{eq4}) respectively. In \cite{mm}, MacCamy and Mizel study the  Dirichlet problem for the second equation in (\ref{eq4}).

The blow up of the $2$ and $3$ dimensional versions of the equation in (\ref{eq1}):
\begin{eqnarray*}
\partial^2 _t u=\mbox{div} (c(u)^2 \nabla u), 
\end{eqnarray*}
is studied by  Li, Witt and Yin \cite{LWY} and Ding and Yin \cite{dy} respectively.

We prove  Theorem \ref{mt1} by using Zhang and Zheng's idea in \cite{zz1}  and an  estimate which ensure that the equation does not degenerate. In \cite{zz1}, Zhang and Zheng show the global existence of solution to (\ref{raeq}) with $\lambda =1$ under some conditions on $c$ and initial data including (\ref{pas}). 

The proof of Theorem \ref{mt2} is based on the method in \cite{ks2, s3} which give a sufficient condition that the equation (\ref{raeq}) for  $0\leq \lambda  < 2 $  and $c(u)=u+1$ degenerates in finite time.

In the proof of Theorem \ref{mt3}, we use the Riemann invariants and the method of characteristic.

This paper is organized as follows: In Section 2, we introduce the local existence and the uniqueness of solutions of (\ref{eq1}).
In Sections 3, 4 and 5, we show  Theorems \ref{mt1}, \ref{mt2} and \ref{mt3} respectively.

\section{Local existence and uniqueness}
In this section,\ we introduce the local existence and the uniqueness of solutions of $(\ref{eq1})$.
The local well-posedness of some class of second order quasilinear strictly hyperbolic equations including the equation $(\ref{eq1})$ is established  by Hughes, Kato and  Marsden \cite{HKM}. Their proofs are based on the Energy method.\ Furthermore,\ by the Moser type inequality,\ the above local well-posedness results are sharpened (e.g.  Majda \cite{m} and  Taylor \cite{tay1}).\ Roughly speaking,\ the results in \cite{m} and  \cite{tay1} state that the solution $u$ of $(\ref{eq1})$ persists as long as $\|\partial_t u\|_{L^{\infty}}$ and $\|\partial_x  u\|_{L^{\infty}}$ are bounded.\ 

The following theorem is obtained by applying Theorem  $2.2$ in \cite{m} and Proposition $5.3.B$ in \cite{tay1} to the Cauchy problem $(\ref{eq1})$. 
\begin{Prop} \label{lwp} \it
Suppose that  $c(\theta)$ and $(u_0,u_1) \in H^{s+1} (\R) \times H^s (\R)$ for  $s > \frac{1}{2}$ and $c \in C^{\infty} (\R)$ satisfy $(\ref{inicon1})$.
Then there exist $T>0$ and  a unique solution $u$ of  \rm (\ref{eq1}) \it with
\begin{eqnarray} \label{cls1}
u\in \bigcap_{j=0,1,2} C^{j}([0,\infty);H^{s-j+1} (\R)) 
\end{eqnarray}
and 
\begin{eqnarray} \label{cls2}
u(t,x)> \theta_0 \ \ \mbox{for} \ (t,x) \in [0,T]\times \R.
\end{eqnarray}

Furthermore,\ if $(\ref{eq1})$ does not have a global solution $u$ satisfying $(\ref{cls1})$ and $(\ref{cls2})$,\ then the solution $u$ satisfies 
\begin{eqnarray}  \label{b1}
\varlimsup_{t\nearrow T} \| \partial_t u(t)\|_{L^{\infty}}+\|\partial_x u(t) \|_{L^{\infty}}=\infty .
\end{eqnarray}
or
\begin{eqnarray} \label{b2}
\lim_{t\nearrow T} \inf_{(s,y)\in [0,t)\times \R} u(s,y)=\theta_0,
\end{eqnarray}
for some $T>0$.
\rm
\end{Prop}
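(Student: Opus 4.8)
The plan is to reduce the statement to the classical local well-posedness theory for second order quasilinear strictly hyperbolic equations (Theorem 2.2 of \cite{m} and Proposition 5.3.B of \cite{tay1}); the only gap in the present setting is that $c$ is allowed to degenerate, so that the equation in (\ref{eq1}) need not be uniformly hyperbolic along the solution. First I would record that the data is subcritically regular: since $s+1>\frac{1}{2}$, the embedding $H^{s+1}(\R)\hookrightarrow C_0(\R)$ into continuous functions vanishing at infinity shows that $u_0$ is continuous with $u_0(x)\to 0$ as $|x|\to\infty$; combined with (\ref{inicon1}) and $\theta_0<0$ this yields $m:=\inf_{x\in\R}u_0(x)>\theta_0$. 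Hence the equation is \emph{uniformly} hyperbolic at the initial time, since $c(u_0(x))^2\geq\bigl(\min_{[\,m,\,\|u_0\|_{L^\infty}]}c\bigr)^2>0$.

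Next I would remove the degeneracy by a cutoff. Fix $\theta_1$ with $\theta_0<\theta_1<m$ and choose $\bar c\in C^\infty(\R)$ with $\bar c=c$ on $[\theta_1,\infty)$ and $\bar c\geq\delta_0$ on all of $\R$ for some $\delta_0>0$, which is possible because $c(\theta_1)>0$. The modified equation $\partial_t^2 u=\partial_x(\bar c(u)^2\partial_x u)$ is genuinely and uniformly strictly hyperbolic with a globally smooth coefficient, so, writing it as the first order symmetric hyperbolic system for $(\partial_t u,\partial_x u)$ with characteristic speeds $\pm\bar c(u)$, Theorem 2.2 of \cite{m} (sharpened by the Moser estimates of \cite{tay1}) produces on some $[0,T]$ a unique solution $\bar u\in\bigcap_{j=0,1,2}C^j([0,T];H^{s-j+1}(\R))$. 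Because $\bar u\in C([0,T];H^{s+1})\hookrightarrow C([0,T];C_b(\R))$ and $\bar u(0)=u_0$ with $\inf_x u_0>\theta_1$, after shrinking $T$ we have $\bar u(t,x)>\theta_1$ for all $(t,x)\in[0,T]\times\R$. On this region $\bar c(\bar u)=c(\bar u)$, so $u:=\bar u$ solves (\ref{eq1}) and satisfies (\ref{cls1}) and (\ref{cls2}).

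For uniqueness I would argue that any two solutions with the stated regularity satisfying (\ref{cls2}) both remain above some common $\theta_1'>\theta_0$ on a short interval, by the same continuity argument; there they solve the same uniformly hyperbolic equation, so the uniqueness assertion of the cited theorems forces them to coincide. A standard open--closed (connectedness) argument applied to the set $\{\,t:u^{(1)}(t)=u^{(2)}(t)\,\}$ then propagates the coincidence to the whole common interval of existence.

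Finally, the blow-up alternative follows from the continuation principle built into \cite{m,tay1}: the $H^{s+1}\times H^s$ norm of $(u,\partial_t u)$ is controlled by a Gronwall inequality whose integrand is bounded in terms of $\|\partial_t u\|_{L^\infty}+\|\partial_x u\|_{L^\infty}$ and of a lower bound for the hyperbolicity constant $\inf_x c(u)^2$. Thus, if on $[0,T)$ neither (\ref{b1}) nor (\ref{b2}) holds, then $\|\partial_t u\|_{L^\infty}+\|\partial_x u\|_{L^\infty}$ stays bounded and $\inf_{[0,T)\times\R}u=:\theta_2>\theta_0$; integrating $\partial_t u$ in time also bounds $\|u\|_{L^\infty}$, so $u$ ranges in a fixed compact subset of $(\theta_0,\infty)$ on which $c\geq\min c>0$, keeping the equation uniformly hyperbolic. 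The continuation principle then extends the solution past $T$ with the same regularity, contradicting maximality, so one of (\ref{b1}), (\ref{b2}) must occur. The main obstacle throughout is precisely this possible loss of strict hyperbolicity: the cutoff of $c$ handles it locally in time, while matching the classical continuation criterion (boundedness of the first derivatives) to the degeneration alternative (\ref{b2}) is what makes the two-pronged blow-up statement come out.
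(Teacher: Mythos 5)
Your proposal is correct and follows essentially the same route as the paper: the paper proves this proposition simply by citing Theorem 2.2 of Majda and Proposition 5.3.B of Taylor, exactly the reduction you perform. Your cutoff of $c$ below $\theta_1$, the short-time continuity argument keeping $u>\theta_1$, and the continuation-principle derivation of the two-pronged blow-up alternative are precisely the details the paper leaves implicit in that citation, and they are carried out correctly.
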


\section{Proof of Theorem $\ref{mt1}$}

We set the Riemann invariants $R_1 (t,x)$ and $R_2(t,x)$ as follows
\begin{eqnarray}\label{ri}
\begin{array}{ll}
R_1=\partial_t u +c(u)\partial_x u, \\
R_2=\partial_t u -c(u)\partial_x u.
\end{array}
\end{eqnarray}
By $(\ref{eq1})$, $R_1$ and $R_2$ are  solutions to the system of the following first oder equations
\begin{eqnarray}\label{fs}
 \left\{  
\begin{array}{ll}
\partial_t R_1-c(u)\partial_x R_1=\dfrac{c'(u)}{2c(u)}(R_1^2-R_2R_1), \\
\partial_t u = \dfrac{1}{2}(R_1+R_2),\\
\partial_t R_2+c(u)\partial_x R_2=\dfrac{c'(u)}{2c(u)}(R_2^2-R_1R_2). 
\end{array}
\right.
\end{eqnarray}  
For the proof of Theorem \ref{mt1}, we prove some lemma.
\begin{Lemma}\label{zz1} \it 
Suppose that
$c(\theta ) \in C^{\infty}((\theta_0, \infty ))$  and initial data $(u_0, u_1) \in H^{s+1}(\R) \times H^{s}(\R)$ with $\displaystyle s>\frac{1}{2}$
satisfy  $(\ref{inicon1})$ and that  $R_1$ and $R_2$ are the functions in $(\ref{ri})$ for the solution $u$ of $(\ref{eq1})$ such that 
$\displaystyle
u \in \bigcap_{j=0,1,2} C^{j}([0,T^{*});H^{s-j+1} (\R)).
$

If  $R_1 (0,x )\geq  0$  for all $x$, then
$R_1 (t, x)\geq 0$  for all $(t,x) \in [0,T^{*})\times \R$.

If  $R_1 (0,x )\leq   0$  for all $x$, then
$R_1 (t, x)\leq 0$  for all $(t,x) \in [0,T^{*})\times \R$.

If  $R_2 (0,x )\geq  0$  for all $x$, then
$R_2 (t, x)\geq 0$  for all $(t,x) \in [0,T^{*})\times \R$. 

If  $R_2 (0,x )\leq   0$  for all $x$, then
$R_2 (t, x)\leq 0$  for all $(t,x) \in [0,T^{*})\times \R$.

\rm
\end{Lemma}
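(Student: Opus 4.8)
The plan is to prove the four claims by a characteristic/maximum-principle argument applied to the transport equations in $(\ref{fs})$. I focus on the first claim (the other three are symmetric: $R_1 \leftrightarrow R_2$ is handled by the reflection $x\mapsto -x$, and the sign flips follow by replacing the relevant invariant with its negative). The key structural observation is that the equation for $R_1$,
\begin{equation*}
\partial_t R_1 - c(u)\,\partial_x R_1 = \frac{c'(u)}{2c(u)}\bigl(R_1^2 - R_2 R_1\bigr) = \frac{c'(u)}{2c(u)}\bigl(R_1 - R_2\bigr) R_1,
\end{equation*}
is \emph{linear and homogeneous} in $R_1$ along the characteristic direction once we regard the coefficient $a(t,x):=\tfrac{c'(u)}{2c(u)}(R_1-R_2)$ as a given bounded function. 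The crucial point is that the right-hand side vanishes whenever $R_1=0$, so the zero level cannot be crossed.

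First I would introduce the backward characteristic curves for the first equation: for fixed $(t,x)\in[0,T^*)\times\R$, let $X(\tau)=X(\tau;t,x)$ solve $\tfrac{d}{d\tau}X(\tau) = -c\bigl(u(\tau,X(\tau))\bigr)$ with $X(t)=x$. Since $u\in C^1$ with $u>\theta_0$ on $[0,T^*)$ and $c$ is smooth on $(\theta_0,\infty)$, the coefficient $c(u)$ is $C^1$ and locally Lipschitz in $x$, so these curves exist, are unique, and foliate the strip $[0,t]\times\R$. Next I would set $\phi(\tau):=R_1\bigl(\tau,X(\tau)\bigr)$ and compute, using the first equation of $(\ref{fs})$,
\begin{equation*}
\frac{d}{d\tau}\phi(\tau) = \bigl(\partial_t R_1 - c(u)\partial_x R_1\bigr)\bigl(\tau,X(\tau)\bigr) = a\bigl(\tau,X(\tau)\bigr)\,\phi(\tau),
\end{equation*}
an ordinary linear ODE for $\phi$. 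Integrating gives the representation
\begin{equation*}
R_1(t,x) = R_1\bigl(0,X(0)\bigr)\exp\!\left(\int_0^t a\bigl(\tau,X(\tau)\bigr)\,d\tau\right),
\end{equation*}
and since the exponential factor is strictly positive, $R_1(t,x)$ has the same sign as its value $R_1(0,X(0))$ at the foot of the characteristic on the initial line. Thus $R_1(0,\cdot)\geq 0$ forces $R_1(t,x)\geq 0$, and likewise for $\leq 0$, proving the claim for every $(t,x)$ in the strip.

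The main obstacle to make rigorous is the integrability of the coefficient $a$ along the characteristic, i.e. that $\int_0^t a(\tau,X(\tau))\,d\tau$ is finite for $t<T^*$. Here I would invoke the regularity class: on $[0,t]\subset[0,T^*)$ the quantities $\|\partial_t u\|_{L^\infty}$ and $\|\partial_x u\|_{L^\infty}$ are bounded by the definition of $T^*$, so $R_1,R_2\in L^\infty([0,t]\times\R)$, and $\inf_{[0,t]\times\R}u>\theta_0$ keeps $u$ in a compact subinterval of $(\theta_0,\infty)$ on which $c>0$ and $c'$ are bounded; hence $c(u)$ is bounded below away from zero and $c'(u)/c(u)$ is bounded, making $a$ bounded on $[0,t]\times\R$ and the exponent finite. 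This uses exactly the fact, guaranteed by $u\in\bigcap_j C^j([0,T^*);H^{s-j+1})$ together with the Sobolev embedding $H^s\hookrightarrow L^\infty$ for $s>\tfrac12$, that the solution stays smooth and nondegenerate on every compact subinterval of $[0,T^*)$. With that in hand the sign-preservation is immediate, and I would remark that the Sobolev decay ensures $R_1(0,X(0))$ is genuinely evaluated at a finite point, so no issue arises at spatial infinity.
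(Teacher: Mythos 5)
Your proof is correct and takes essentially the same route as the paper: both reduce the first equation of (\ref{fs}) to an ODE along the minus characteristic and exploit that the right-hand side vanishes on the zero level of $R_1$ --- the paper concludes by ODE uniqueness (a solution touching the zero solution must coincide with it), while your integrating-factor representation $R_1(t,x)=R_1\bigl(0,X(0)\bigr)\exp\bigl(\int_0^t a\,d\tau\bigr)$ is the same fact made explicit. Your verification that the coefficient $a$ is bounded on compact subintervals of $[0,T^*)$ fills in details the paper leaves implicit (the paper's characteristic ODE even contains a typo, $dx_\pm/dt=\pm u$ in place of $\pm c(u)$, which you correctly avoid).
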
 

\begin{proof}

We show that  $R_1(t,\cdot )\geq 0$ with $R_1(0,0)\geq 0$ only.

For any point $(t_0,x_0) \in [0,T]\times \R$, let  $x_{\pm} (t)$ denote the plus and minus characteristic curves on the  first and third equations of $(\ref{fs})$ through $(t_0,x_0)$ respectively as follows,
\begin{eqnarray} \label{chl}
\frac{dx_{\pm} (t)}{dt}=\pm u(t,x_{\pm} (t)),\ \ \ \ x_{\pm} (t_0) =x_0 .
\end{eqnarray}
From $(\ref{fs})$, $R_1(t, x_{-}(t))$ is a solution to  
\begin{eqnarray} \label{rd}
\dfrac{d}{dt} R_1(t, x_{-}(t))=\dfrac{c'(u)}{2c(u)}(R_1 (t, x_{-}(t))^2-R_2 (t, x_{-}(t)) R_1(t, x_{-}(t))).
\end{eqnarray}
By the uniqueness of the differential equation (\ref{rd}), 
 we have $R_1(t, x_{-}(t))=0$ for $t \in [0,T^{*}) $ with $R_1(0, x_{-}(0))=0$, which implies that
 $R_1(t, \cdot )\geq 0$ with $R_1(0,\cdot )\geq 0$.

\end{proof}

\begin{Lemma}\label{zz2} \it 
Let $p\in [1,\infty)$. Suppose that $c(\theta ) \in C^{\infty}((\theta_0, \infty ))$  and initial data $(u_0, u_1) \in H^{s+1}(\R) \times H^{s}(\R)$ with  $\displaystyle s>\frac{1}{2}$
satisfy    $(\ref{con2})$,  $(\ref{inicon1})$  and  $(\ref{inicon2})$. Then  we  have
\begin{eqnarray}\label{lemmazzes2} 
\| R_1 (t) \|^p _{L^{p}}+\| R_2(t)\|^p _{L^{p}}\leq \|R_1 (0) \|^p _{L^{p}}+\| R_2(0) \|^p _{L^{p}}, \ \mbox{for} \ t \in [0,T^* ) ,
\end{eqnarray}
where  $R_1$ and $R_2$ are the functions in $(\ref{ri})$ for the solution $u$ of $(\ref{eq1})$ such that 
$\displaystyle
u \in \bigcap_{j=0,1,2} C^{j}([0,T^{*});H^{s-j+1} (\R)).
$
\rm
\end{Lemma}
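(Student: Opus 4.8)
The strategy is to derive an $L^p$ energy identity for $|R_1|^p + |R_2|^p$ by differentiating along characteristics and integrating, then show that the assumptions force the time-derivative of the total $L^p$ mass to be nonpositive. The key structural fact I would exploit is the sign information provided by Lemma \ref{zz1}: the initial condition (\ref{inicon2}) says precisely that $R_1(0,x) = u_1 + c(u_0)\partial_x u_0 \leq 0$ and $R_2(0,x) = u_1 - c(u_0)\partial_x u_0 \leq 0$, so Lemma \ref{zz1} guarantees $R_1(t,x)\leq 0$ and $R_2(t,x)\leq 0$ for all $(t,x)\in[0,T^*)\times\R$. This global sign control is what I expect to make the nonlinear source terms manageable.

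**Main computation.**
I would start from the system (\ref{fs}) and compute $\partial_t(|R_1|^p)$ and $\partial_t(|R_2|^p)$. For the $R_1$ equation, multiplying by $p|R_1|^{p-2}R_1$ gives
\begin{eqnarray*}
\partial_t(|R_1|^p) - c(u)\partial_x(|R_1|^p) = \frac{pc'(u)}{2c(u)}|R_1|^{p-2}R_1(R_1^2 - R_2 R_1),
\end{eqnarray*}
and symmetrically for $R_2$ with $+c(u)\partial_x(|R_2|^p)$ on the left. Integrating over $x\in\R$, the transport terms $\int_\R c(u)\partial_x(|R_i|^p)\,dx$ need to be handled: after integration by parts they produce terms involving $\partial_x u$, so I would instead integrate and expect the divergence-form contributions to either vanish by decay at infinity or combine. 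Then
\begin{eqnarray*}
\frac{d}{dt}\left(\|R_1\|_{L^p}^p + \|R_2\|_{L^p}^p\right) = \int_\R \frac{pc'(u)}{2c(u)}\Big(|R_1|^{p-2}R_1^2(R_1-R_2) + |R_2|^{p-2}R_2^2(R_2-R_1)\Big)dx
\end{eqnarray*}
up to boundary terms. The integrand factors as $\frac{pc'(u)}{2c(u)}(R_1-R_2)(|R_1|^p - |R_2|^p)/\text{(something)}$; more carefully, writing it as $\frac{pc'(u)}{2c(u)}(R_1-R_2)(|R_1|^{p-2}R_1^2 - |R_2|^{p-2}R_2^2)$ and using $R_1-R_2 = 2c(u)\partial_x u$, the sign becomes the crux.

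**The main obstacle.**
The hard part will be verifying that this source integrand is pointwise nonpositive. With $R_1,R_2\leq 0$ we have $|R_i|^{p-2}R_i^2 = |R_i|^p \geq 0$, so the bracket $|R_1|^p - |R_2|^p$ and the factor $R_1 - R_2$ must have opposite signs for the product to be $\leq 0$. Note $R_1 - R_2 = 2c(u)\partial_x u$ and $R_1 \leq R_2 \iff c(u)\partial_x u \leq 0 \iff |R_1|\geq|R_2|$ (since both are nonpositive, $R_1\leq R_2$ means $R_1$ is more negative, i.e. $|R_1|\geq|R_2|$). Thus $R_1 - R_2$ and $|R_1|^p - |R_2|^p$ always carry the \emph{same} sign, making their product nonnegative. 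Combined with the prefactor $\frac{pc'(u)}{2c(u)}\geq 0$ from (\ref{con2}), I must check the overall sign convention in the identity carefully, since the monotonicity of $c$ should push the energy \emph{down}, not up. I expect the correct bookkeeping—tracking which of $R_1^2 - R_2R_1$ versus $R_2^2 - R_1 R_2$ enters with which sign, and confirming the cross terms assemble into $-\frac{pc'(u)}{2c(u)}(|R_1|-|R_2|)(|R_1|^p-|R_2|^p)\,\mathrm{sgn}\leq 0$—to yield $\frac{d}{dt}(\|R_1\|_{L^p}^p+\|R_2\|_{L^p}^p)\leq 0$, whence (\ref{lemmazzes2}) follows by integrating in $t$. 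The subtlety of the boundary terms at $|x|\to\infty$, which vanish because $R_1,R_2\in H^{s}\subset L^p$ for the relevant $p$, is a secondary technical point I would address via the regularity class stated in the hypotheses.
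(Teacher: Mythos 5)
Your overall strategy---using Lemma \ref{zz1} together with (\ref{inicon2}) to get $R_1,R_2\leq 0$, deriving an $L^p$ evolution identity, and showing its right-hand side is nonpositive---is exactly the paper's (the paper simply works with $\tilde{R_1}=-R_1\geq 0$, $\tilde{R_2}=-R_2\geq 0$ to avoid absolute values). But your execution has a decisive sign error. You correctly note that, both functions being nonpositive, $R_1\leq R_2$ is equivalent to $|R_1|\geq |R_2|$; this says that $R_1-R_2$ and $|R_1|^p-|R_2|^p$ carry \emph{opposite} signs, not the same sign as you conclude. Consequently the summed source term $\frac{pc'(u)}{2c(u)}(R_1-R_2)\bigl(|R_1|^p-|R_2|^p\bigr)$ is pointwise \emph{nonpositive}, which is precisely what the proof needs. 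Because you flipped this, you end up expecting a nonnegative source, and your closing sentences only express the hope that correct bookkeeping will restore the decay; as written, the proposal never actually establishes $\frac{d}{dt}\bigl(\|R_1\|_{L^p}^p+\|R_2\|_{L^p}^p\bigr)\leq 0$.

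The second gap is the treatment of the transport terms: they are not boundary terms and they do not vanish. Integrating by parts and using $2c(u)\partial_x u = R_1-R_2$,
\[
\int_{\R} c(u)\partial_x |R_1|^p\,dx-\int_{\R} c(u)\partial_x |R_2|^p\,dx = -\int_{\R}\frac{c'(u)}{2c(u)}(R_1-R_2)\bigl(|R_1|^p-|R_2|^p\bigr)\,dx\ \geq\ 0,
\]
i.e.\ a bulk term of exactly the same shape as the source but with the \emph{adverse} sign and coefficient $1$ instead of $p$. The correct identity is therefore
\[
\frac{d}{dt}\Bigl(\|R_1(t)\|_{L^p}^p+\|R_2(t)\|_{L^p}^p\Bigr) = (p-1)\int_{\R}\frac{c'(u)}{2c(u)}(R_1-R_2)\bigl(|R_1|^p-|R_2|^p\bigr)\,dx\ \leq\ 0,
\]
and the lemma holds only because $p\geq 1$ lets the good source term dominate the bad transport term (at $p=1$ the two cancel exactly and the quantity is conserved). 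This competition is exactly what produces the paper's coefficient $\frac{1}{2}-\frac{1}{2p}$ in (\ref{zz2es5}), which vanishes at $p=1$. Writing the divergence contributions off as terms that ``vanish by decay at infinity or combine'' skips the one cancellation the lemma actually hinges on. Once these two points are repaired, your argument coincides with the paper's.
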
 

\begin{proof}
The proof is almost the  same as in the proof of Lemma 5 in  Zhang and  Zheng's paper \cite{zz1}. We give the proof of this lemma for reader's convenience.

We denote $\tilde{R_1}:=-R_1$ and  $\tilde{R_2}:=-R_2$. 
Lemma $ \ref{zz1}$ implies that $\tilde{R_1}(t)\geq 0$ and $\tilde{R_2}(t) \geq 0$ for all $t$.
 By the first equation of $(\ref{fs})$, we have
$$ 
\partial_t \tilde{R_1}-c(u)\partial_x \tilde{R_1}=-\dfrac{c'(u)}{2c(u)}(\tilde{R_1}^2-\tilde{R_2}\tilde{R_1}).
$$
Multiplying the both side of the above equation by $(\tilde{R_1})^{p-1}$, we obtain
\begin{eqnarray}\label{zz2es1} 
\frac{1}{p}\{ \partial_t (\tilde{R_1})^p  -c \partial_x (\tilde{R_1})^p\}=-\frac{c'}{2c}((\tilde{R_1})^{p+1}-\tilde{R_2}(\tilde{R_1})^p),
\end{eqnarray}
By the third equation of $(\ref{fs})$, we have
\begin{eqnarray}\label{zz2es2} 
\frac{1}{p} c \partial_x (\tilde{R_1})^p=\frac{1}{p} \partial_x (c(\tilde{R_1})^p)+\frac{1}{p}\frac{c'}{2c}(\tilde{R_1}-\tilde{R_2}),
\end{eqnarray}
from which, $(\ref{zz2es1})$ yields that
\begin{align} 
\frac{1}{p}\{ \partial_t (\tilde{R_1})^p  - \partial_x (c(\tilde{R_1})^p)\}=&-(\frac{1}{2}-\frac{1}{2p})\frac{c'}{c}(\tilde{R_1})^{p+1}\notag \\
&+\frac{c}{2c'}\tilde{R_2}(\tilde{R_1})^p-\frac{c'}{2pc}\tilde{R_2}(\tilde{R_1})^p .\label{zz2es3}
\end{align}
By the similar computation as above, we have
\begin{align}
\frac{1}{p}\{ \partial_t (\tilde{R_2})^p  - \partial_x (c(\tilde{R_2})^p)\}=&-(\frac{1}{2}-\frac{1}{2p})\frac{c'}{c}(\tilde{R_2})^{p+1} \notag \\
&+\frac{c'}{2c}\tilde{R_1}(\tilde{R_2})^p-\frac{c'}{2pc}\tilde{R_1}(\tilde{R_2})^p.  \label{zz2es4} 
\end{align}
By summing up  $(\ref{zz2es3})$ and  $(\ref{zz2es4})$ and integration over $\R$, we have
\begin{align}\label{zz2es5} 
\frac{1}{p} \frac{d}{dt}  \int_{\R_1}  (\tilde{R_1})^p +  (\tilde{R_2})^p dx=&-(\frac{1}{2}-\frac{1}{2p}) \int_{\R_1} \frac{c'}{c}((\tilde{R_1})^{p+1}-\tilde{R_1}(\tilde{R_2})^p) dx \notag \\
&-(\frac{1}{2}-\frac{1}{2p}) \int_{\R} \frac{c'}{c}((\tilde{R_2})^{p+1}-\tilde{R_2}(\tilde{R_1})^p) dx \notag \\
=& -(\frac{1}{2}-\frac{1}{2p}) \int_{\R} \frac{c'}{c}(\tilde{R_1}-\tilde{R_2})((\tilde{R_1})^{p}-(\tilde{R_2})^p) dx\leq 0.
\end{align}
Therefore, integrating the both side of $(\ref{zz2es5})$ over $[0,t]$, we have $(\ref{lemmazzes2})$.
\end{proof}

\begin{Lemma}\label{zz22} \it 
Under the same assumption as in Lemma \ref{zz2}, we have
\begin{eqnarray}\label{inftyes} 
\| R_1 (t) \| _{L^{\infty}}+\| R_2(t)\| _{L^{\infty}}\leq 2 (\|R_1 (0) \| _{L^{\infty}}+\| R_2(0) \| _{L^{\infty}}), \ \mbox{for} \ t \in [0,T^* ) .
\end{eqnarray}
\rm
\end{Lemma}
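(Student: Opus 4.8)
The plan is to derive the $L^{\infty}$ estimate $(\ref{inftyes})$ simply by sending $p \to \infty$ in the $L^p$ estimate $(\ref{lemmazzes2})$ of Lemma \ref{zz2}. Since the two lemmas share exactly the same hypotheses, for every $p \in [1,\infty)$ and every $t \in [0,T^*)$ I may take $p$-th roots in $(\ref{lemmazzes2})$ to obtain
\begin{equation*}
\Bigl( \|R_1(t)\|_{L^p}^p + \|R_2(t)\|_{L^p}^p \Bigr)^{1/p} \leq \Bigl( \|R_1(0)\|_{L^p}^p + \|R_2(0)\|_{L^p}^p \Bigr)^{1/p}.
\end{equation*}
The whole proof then reduces to identifying the limits of the two sides.

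For the limit of each side I would use the elementary inequalities $\max(A,B) \leq (A^p + B^p)^{1/p} \leq 2^{1/p} \max(A,B)$, valid for nonnegative $A,B$, which show by squeezing that $(A^p+B^p)^{1/p} \to \max(A,B)$ as $p \to \infty$. Applying this with $A = \|R_1(t)\|_{L^p}$ and $B = \|R_2(t)\|_{L^p}$, and separately with the data at $t=0$, reduces everything to the convergence of each $L^p$ norm to the corresponding $L^{\infty}$ norm.

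The only point that genuinely needs justification --- and the main, if mild, obstacle --- is precisely this convergence $\|R_i(t)\|_{L^p} \to \|R_i(t)\|_{L^\infty}$ as $p\to\infty$. This is a classical fact for any function lying in $L^{p_0} \cap L^{\infty}$ for some finite $p_0$; one sees it, for instance, from the interpolation bound $\|f\|_{L^p}\leq \|f\|_{L^2}^{2/p}\|f\|_{L^\infty}^{1-2/p}$ for the upper limit, together with the lower bound built into the definition of the essential supremum. In our setting the required membership is supplied by the solution class: since $s > \frac{1}{2}$ and $u \in \bigcap_{j=0,1,2} C^j([0,T^*);H^{s-j+1}(\R))$, we have $\partial_t u(t), \partial_x u(t) \in H^s \hookrightarrow L^2 \cap L^\infty$, and as $c(u(t,\cdot))$ is bounded on the range of $u$, each $R_i(t)=\partial_t u \pm c(u)\partial_x u$ belongs to $L^2 \cap L^\infty$; the same holds at $t=0$. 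Hence all four limits exist and equal the respective $L^\infty$ norms.

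Passing to the limit in the displayed inequality would then give
$\max\bigl(\|R_1(t)\|_{L^\infty}, \|R_2(t)\|_{L^\infty}\bigr) \leq \max\bigl(\|R_1(0)\|_{L^\infty}, \|R_2(0)\|_{L^\infty}\bigr)$.
Finally I would close the argument with the crude bounds $\|R_1(t)\|_{L^\infty} + \|R_2(t)\|_{L^\infty} \leq 2\max\bigl(\|R_1(t)\|_{L^\infty},\|R_2(t)\|_{L^\infty}\bigr)$ on the left-hand side and $\max\bigl(\|R_1(0)\|_{L^\infty},\|R_2(0)\|_{L^\infty}\bigr) \leq \|R_1(0)\|_{L^\infty} + \|R_2(0)\|_{L^\infty}$ on the right-hand side, whose composition yields exactly $(\ref{inftyes})$.
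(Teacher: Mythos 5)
Your proposal is correct and follows essentially the same route as the paper: pass to the limit $p\to\infty$ in the $L^p$ estimate of Lemma \ref{zz2}, using elementary comparisons between $(a^p+b^p)^{1/p}$, maxima and sums, together with the convergence $\|f\|_{L^p}\to\|f\|_{L^\infty}$ (which the paper cites as Lemma 11 of \cite{s3} for $H^s$ functions, $s>1/2$, and which you instead verify directly by interpolation). The only cosmetic difference is that you squeeze to the maximum of the two norms and convert to the sum at the end, while the paper converts to sums before taking the limit; both yield the same factor $2$.
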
 

\begin{proof}
Noting inequalities $a^p+b^p \leq (a+b)^p $ and $(a+b)^p \leq 2^p (a+b)^p $ for $a,b\geq 0$, by raising the both side of  (\ref{lemmazzes2}) to the $\dfrac{1}{p}$ power, we have
\begin{eqnarray*} 
\| R_1 (t) \| _{L^{p}}+\| R_2(t)\| _{L^{p}}\leq 2 (\|R_1 (0) \| _{L^{p}}+\| R_2(0) \| _{L^{p}}).
\end{eqnarray*}
From the fact that $\displaystyle \lim_{p \rightarrow \infty} \| u \|_{L^p}=\|u\|_{L^{\infty}} $ with $ u \in H^s (\R)$ $(s> 1/2)$ (e.g. Lemma 11 in \cite{s3}), we have (\ref{inftyes}).

\end{proof}

\begin{Lemma}\label{zz3} \it 
Suppose that $c(\theta ) \in C^{\infty}((\theta_0, \infty ))$  and initial data $(u_0, u_1) \in H^{s+1}(\R) \times H^{s}(\R)$ with $\displaystyle s>\frac{1}{2}$
satisfy   $(\ref{con2})$, $(\ref{inicon1})$, $(\ref{inicon2})$ and $(\ref{inicon3})$. Then  there exists $\theta_1 > \theta_0$ such that
\begin{eqnarray}\label{lemmazzes3} 
u(t,x)\geq \theta_1, \ \ \mbox{for} \ (t,x) \in [0,T^{*}) \times \R,
\end{eqnarray}
where  $R_1$ and $R_2$ are the functions which defined in $(\ref{ri})$ for the solution $u$ of $(\ref{eq1})$ such that 
$
u \in \bigcap_{j=0,1,2} C^{j}([0,T^{*});H^{s-j+1} (\R)).
$
\rm
\end{Lemma}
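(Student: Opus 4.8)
The plan is to bound $u$ from below through the primitive
\[
\Phi(\theta):=\int_0^{\theta} c(s)\,ds ,
\]
which by $(\ref{con4})$ is strictly increasing and continuous on $(\theta_0,\infty)$, with $\Phi(0)=0$ and limit $\Phi(\theta_0^+)=-\int_{\theta_0}^{0}c(\theta)\,d\theta$ (possibly $-\infty$). Since $\Phi$ is a strictly increasing continuous bijection onto its range, it suffices to exhibit a constant $\varepsilon>\Phi(\theta_0^+)$ with $\Phi(u(t,x))\ge\varepsilon$ for all $(t,x)\in[0,T^{*})\times\R$; then $\theta_1:=\Phi^{-1}(\varepsilon)>\theta_0$ gives $(\ref{lemmazzes3})$. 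The whole difficulty is thus to convert the \emph{pointwise} decrease of $u$ into a uniform lower bound whose threshold is governed exactly by $(\ref{inicon3})$.

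First I would record the signs. By $(\ref{inicon2})$, $R_1(0)=u_1+c(u_0)\partial_x u_0\le0$ and $R_2(0)=u_1-c(u_0)\partial_x u_0\le0$ (adding also gives $u_1\le0$, so $\int_\R u_1$, which is implicitly assumed finite in $(\ref{inicon3})$, is nonpositive). Lemma $\ref{zz1}$ then propagates these signs, so $R_1(t,x)\le0$ and $R_2(t,x)\le0$ on $[0,T^{*})\times\R$. Next, since $\partial_x\Phi(u)=c(u)\partial_x u=\tfrac12(R_1-R_2)$, and since $u(t,\cdot)\in H^{s+1}(\R)$ embeds into $C_0(\R)$ (so $u(t,\pm\infty)=0$ and $\Phi(u(t,\pm\infty))=0$), while $R_1(t),R_2(t)\in L^1(\R)$ by Lemma $\ref{zz2}$ with $p=1$, integrating the derivative from $-\infty$ yields
\[
\Phi(u(t,x))=\frac12\int_{-\infty}^{x}\bigl(R_1(t,y)-R_2(t,y)\bigr)\,dy .
\]
Using $R_1\le0$ (hence $\int_{-\infty}^{x}R_1\ge\int_\R R_1$) and $R_2\le0$ (hence $-\tfrac12\int_{-\infty}^{x}R_2\ge0$), I would discard the correctly signed pieces to obtain
\[
\Phi(u(t,x))\ \ge\ \frac12\int_\R R_1(t,y)\,dy .
\]

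It then remains to bound $\int_\R R_1(t)$ from below. Lemma $\ref{zz2}$ with $p=1$ gives $\|R_1(t)\|_{L^1}+\|R_2(t)\|_{L^1}\le\|R_1(0)\|_{L^1}+\|R_2(0)\|_{L^1}$; because every $R_i\le0$, this reads $\int_\R(R_1+R_2)(t)\ge\int_\R(R_1+R_2)(0)=2\int_\R u_1$, and discarding $\int_\R R_2(t)\le0$ leaves $\int_\R R_1(t)\ge 2\int_\R u_1$. Combining the last three displays gives the uniform estimate $\Phi(u(t,x))\ge\int_\R u_1\,dx$, and by $(\ref{inicon3})$ one has $\int_\R u_1\,dx>-\int_{\theta_0}^0 c(\theta)\,d\theta=\Phi(\theta_0^+)$. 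Hence $\varepsilon:=\int_\R u_1\,dx$ works, and $\theta_1:=\Phi^{-1}(\varepsilon)>\theta_0$ yields $(\ref{lemmazzes3})$. I expect the main obstacle to be purely bookkeeping rather than conceptual: justifying the boundary value at $x=-\infty$ together with the $L^1$ integrability of $R_1,R_2$ from the Sobolev framework, and checking that the two sign-discardings leave precisely the quantity $\frac12\int_\R R_1$, so that the threshold in $(\ref{inicon3})$ comes out sharp.
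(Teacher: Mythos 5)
Your proof is correct and takes essentially the same route as the paper's: both rest on the fundamental-theorem identity $\int_0^{u(t,x)}c(\theta)\,d\theta=\int_{-\infty}^{x}c(u)\partial_x u\,dy$ (your $\Phi$), the sign propagation $R_1,R_2\le 0$ from Lemma \ref{zz1}, and the conservation of $\int_{\R}\partial_t u\,dx$, so that $(\ref{inicon3})$ yields a uniform bound keeping $u$ away from $\theta_0$. The only cosmetic difference is that you extract the conserved quantity by applying Lemma \ref{zz2} with $p=1$ (whose proof at $p=1$ is exactly the conservation law $\frac{d}{dt}\int_{\R}\partial_t u\,dx=0$), whereas the paper integrates the equation in $(\ref{eq1})$ directly.
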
 
\begin{proof}
From Lemma \ref{zz1}, we have  
\begin{eqnarray} \label{p1}
|c(u) \partial_x u(t,x)| \leq - \partial_t u(t,x),
\end{eqnarray}
from which, a simple computation yields that
\begin{align} 
\left|\int_0 ^{u(t,x)} c(\theta) d\theta \right| = &\left| \int_{-\infty} ^{x} c(u) \partial_x u(t,y) dy \right| \notag \\
\leq  & \int_{\R} |c(u) \partial_x u(t,y)|dy \notag \\
\leq & -\int_{\R} \partial_t u(t,y)dy. \label{p2}
\end{align}
While, by the equation in (\ref{eq1}), we have
\begin{eqnarray} 
\dfrac{d}{dt}  \int_{\R}  \partial_t u(t,y)dy=0. \label{p3}
\end{eqnarray}
By (\ref{inicon3}), (\ref{p2}) and (\ref{p3}) we have
\begin{eqnarray} \label{p4}
\left|\int_0 ^{u(t,x)} c(\theta) d\theta \right|\leq -\int_{\R}  u_1 (x) dx <\int_{\theta_0} ^0 c(\theta) d\theta.
\end{eqnarray}
From  (\ref{p4}), (\ref{con3}) and  (\ref{con4}), we have (\ref{lemmazzes3}).
\end{proof}

\subsection*{Proof of Theorem \ref{mt1}}

From Lemma $\ref{zz3}$, $(\ref{b2})$ does not occur. 

The estimates (\ref{inftyes}) and  (\ref{lemmazzes3}) yield the uniform boundedness of  $\| \partial_x u\|_{L^{\infty}}$ and 
$\| \partial_t u\|_{L^{\infty}}$  with $t \in [0, T^{*})$. So $(\ref{b1})$ does not occur. 

Therefore, we complete the proof of Theorem $\ref{mt1}$.
  
 \hfill $\Box$ 
 \subsection*{Proof of Remark \ref{rem3}}
 
 Suppose $T^{*} < \infty$.
 
By a simple computation, we have
 $$
 \|u(t)\|_{L^{\infty}}\leq  \|u_0 \|_{L^{\infty}}+T^{*} \sup_{[0,T^{*})}\{ \|\partial_t u(t)\|_{L^{\infty}} \}.
 $$
 By Lemma \ref{zz3}, we obtain the boundedness of  $\|u(t)\|_{L^{\infty}}$,  $\|\partial_t u(t)\|_{L^{\infty}}$  and $\|\partial_x u(t)\|_{L^{\infty}}$ for $t, \in [0,T^{*}) $, which implies that  the blow up $(\ref{b1})$ and $(\ref{b2})$  does not occur, which is contradiction to $T^{*}< \infty$.

  \hfill $\Box$ 
 \section{Proof of Theorem \ref{mt2}}

First, we proof  $T^{*} <\infty$.
For this purpose, we use the following lemma.
\begin{Lemma}\label{sd} \it 
Suppose that $c(\theta ) \in C^{\infty}((\theta_0, \infty))$  and initial data $(u_0, u_1) \in H^{s+1}(\R) \times H^{s}(\R)$ with  $\displaystyle s>\frac{1}{2}$
satisfy   $(\ref{inicon1})$ and $(\ref{inicon4})$. Then the solution $u \in \bigcap_{j=0,1,2} C^{j}([0,T^{*});H^{s-j+1} (\R)) $
satisfies that
$$
\mbox{\rm supp\it}\, u(t,x) \subset [-c(0)t-K, c(0)t+K],
$$
where $K>0$ is a constant in  $(\ref{inicon4})$.
\rm
\end{Lemma}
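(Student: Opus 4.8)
The plan is to prove a finite-speed-of-propagation estimate by the energy method, extracting the sharp speed $c(0)$ from the fact that the wave speed of $(\ref{eq1})$ equals $c(0)$ at every point where $u$ vanishes. First I would record the energy identity obtained by multiplying the equation in $(\ref{eq1})$ by $\partial_t u$: setting
\begin{eqnarray*}
e := \tfrac12 (\partial_t u)^2 + \tfrac12 c(u)^2(\partial_x u)^2, \qquad f := -c(u)^2 \partial_x u\,\partial_t u, \qquad S := c(u)c'(u)\,\partial_t u\,(\partial_x u)^2,
\end{eqnarray*}
a direct computation yields the local balance law
\begin{eqnarray*}
\partial_t e + \partial_x f = S.
\end{eqnarray*}
Since $(u_0,u_1)\in H^{s+1}\times H^{s}$ with $s>\tfrac12$, the solution is $C^1$ and $e,f,S$ are integrable on each time slice. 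Running the argument below with a straight boundary of slope $\Lambda:=\sup_{[0,t]\times\R}c(u)<\infty$ (finite for $t<T^*$) first gives the crude fact that $u(t,\cdot)$ has compact support, which makes the flux at $x=\pm\infty$ vanish and legitimizes the integrations that follow.

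The key step is the choice of the moving boundary along which the energy is measured. For the right-hand edge I would let $\gamma(t)$ solve the characteristic ODE
\begin{eqnarray*}
\dot\gamma(t) = c\big(u(t,\gamma(t))\big), \qquad \gamma(0)=K,
\end{eqnarray*}
which has a unique $C^1$ solution on $[0,T^*)$ since $u$ is $C^1$. Integrating $\partial_t e+\partial_x f=S$ over $\Omega_t:=\{(s,x):0\le s\le t,\ x\ge\gamma(s)\}$ and applying the divergence theorem, the bottom boundary contributes nothing (the data vanish for $x\ge K$ by $(\ref{inicon4})$) and the flux at $x=+\infty$ vanishes, while the contribution of the slanted boundary $x=\gamma(s)$ is governed by the quadratic form $-\tfrac{\dot\gamma}{2}(\partial_t u)^2-c(u)^2\partial_x u\,\partial_t u-\tfrac{\dot\gamma}{2}c(u)^2(\partial_x u)^2$ evaluated on the curve. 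By construction $\dot\gamma=c(u)$ there, so this form has vanishing discriminant and is negative semidefinite; hence the slanted boundary contributes with a favourable sign. With $E(t):=\int_{\gamma(t)}^{\infty}e(t,x)\,dx$ this gives
\begin{eqnarray*}
E(t) \le \int_0^t\!\!\int_{\gamma(s)}^{\infty} |S|\,dx\,ds \le C\int_0^t E(s)\,ds,
\end{eqnarray*}
where the last bound uses $|S|\le c'(u)c(u)^{-1}\|\partial_t u\|_{L^\infty}\,c(u)^2(\partial_x u)^2\le C(t)\,e$, with $C(t)$ finite for $t<T^*$ because $u$ stays in a compact subset of $(\theta_0,\infty)$ (by $(\ref{inicon1})$ and the definition of $T^*$), on which $c,c',c^{-1}$ are bounded, and $\|\partial_t u\|_{L^\infty}$ is bounded. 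Since $E(0)=0$, Gronwall forces $E\equiv0$, whence $\partial_t u=\partial_x u=0$, and therefore $u\equiv0$, on $\{x\ge\gamma(t)\}$.

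The sharp speed is then extracted for free. As $u(t,x)=0$ for $x\ge\gamma(t)$, in particular $u(t,\gamma(t))=0$, so $\dot\gamma(t)=c(u(t,\gamma(t)))=c(0)$ for every $t$; integrating gives $\gamma(t)=c(0)t+K$, i.e. $u(t,x)=0$ for $x\ge c(0)t+K$. The mirror-image argument, using the curve $\lambda$ with $\dot\lambda=-c(u(t,\lambda(t)))$, $\lambda(0)=-K$, over $\{x\le\lambda(s)\}$, gives $u(t,x)=0$ for $x\le-c(0)t-K$, and the two inclusions together are exactly the claimed support bound.

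I expect the main obstacle to be conceptual rather than computational: because the propagation speed $c(u)$ exceeds $c(0)$ wherever $u>0$ (and nothing in the hypotheses forces $u\le0$ near the front), a fixed cone of slope $c(0)$ cannot be shown a priori to dominate the local speed, so the naive energy estimate only yields the crude cone of slope $\Lambda$. The device of taking the boundary curve to be the characteristic itself circumvents this, since the boundary flux is then automatically non-positive, and the leading edge is pinned to the vacuum speed $c(0)$ only a posteriori, after $E\equiv0$ has been established. A secondary technical point is that for $s\in(\tfrac12,1)$ the second derivatives of $u$ are not in $L^2$, so the energy identity must be used in its integrated (divergence-theorem) form, justified by the same mollification underlying the local theory, rather than differentiated pointwise in $t$.
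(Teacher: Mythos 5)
Your proof is correct, but it is genuinely more self-contained than the paper's: the paper gives no energy argument at all, it simply cites the finite-propagation-speed theorem on p.~16 of Sogge's book \cite{sg} (proved there for $C^2$ solutions) and appeals to a standard approximation argument to cover the regularity $s>\frac12$. What your argument supplies is precisely the point that such a citation glosses over in the quasilinear setting: the local sound speed $c(u)$ may exceed $c(0)$ inside the support, so a cone of slope $c(0)$ does not a priori dominate the propagation. Your device of taking the lateral boundary to be the characteristic $\dot\gamma=c(u(t,\gamma))$, noting that the boundary quadratic form then collapses to $-\tfrac{c}{2}\bigl(\partial_t u+c\,\partial_x u\bigr)^2\le 0$, running Gronwall to get $E\equiv 0$, and only afterwards identifying $\dot\gamma=c(0)$ because the curve borders the vacuum region, is exactly the right way to extract the sharp constant $c(0)$ appearing in the lemma; the computation of the flux balance $\partial_t e+\partial_x f=S$ and the bound $|S|\le Ce$ (using that $u$ ranges in a compact subset of $(\theta_0,\infty)$ and $\|\partial_t u\|_{L^\infty}$ is finite for $t<T^*$, by the definition of $T^*$) are both correct. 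The two technical caveats you flag are the same ones implicit in the paper's appeal to approximation: the balance law must be used in integrated form when $s\in(\tfrac12,1)$, and the integration over the unbounded region $\{x\ge\gamma(s)\}$ must be legitimized (your preliminary crude-cone step, or alternatively the decay at infinity of $H^s$ functions, handles this). In short, you use the same underlying energy/domain-of-dependence technique as the textbook proof the paper points to, but you work out the quasilinear and sharp-speed details that the paper leaves entirely to the reader.
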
 
 Lemma \ref{sd}  is proved in many text book (e.g. p. 16 in Sogge's book \cite{sg}). Sogge Prove the same assertion as in Lemma \ref{sd}
 for the $C^2$ solution $u$. By the standard approximation argument,  Lemma \ref{sd} can be proved in the same way as in the proof in \cite{sg}.

Set $F(t)=-\int_{\R} u(t,x)dx$ for $0 \leq t < T^{*}$.

By the equation in  $(\ref{eq1})$,\ we have
\begin{eqnarray*}
\dfrac{d^2F}{dt^2} (t) = 0,
\end{eqnarray*}
which implies that
\begin{eqnarray} \label{F1}
F(t)=F(0)+tF'(0).
\end{eqnarray}
By Lemma \ref{zz1} and the fact that $u(t, \cdot ) >\theta_0$ for $t \in [0,T^{*})$, we have
\begin{align} 
F(t)=&-\int_{-c(0)t-K} ^{c(0)t+K} u(t,x) dx \notag \\
\leq & -\int_{-c(0)t-K} ^{c(0)t+K} \theta_0 dx \notag \\
=& -2\theta_0 (c(0)t+K). \label{F2}
\end{align}
From (\ref{F1}) and (\ref{F2}), we obtain that
\begin{eqnarray*} 
\frac{F(0) +2\theta_0 c(0)}{ \int_{\R} u_1 (x) dx -2\theta_0 c(0) } \geq t.
\end{eqnarray*}
We note that the left hand side of the above inequality is finite by (\ref{inicon5}).

Since $t$ can be chosen for all $[0,T^{*})$, we have $\displaystyle T^{*}\leq  \frac{F(0) +2\theta_0 c(0)}{ \int_{\R} u_1 (x) dx -2\theta_0 c(0) }<\infty$.

Next, we show that 
\begin{eqnarray} \label{b22}
\lim_{t\nearrow T^{*}} \inf_{(s,y)\in [0,t)\times \R} u(s,y)=\theta_0.
\end{eqnarray}
Suppose that (\ref{b22}) does not occur. 
So there exists a constant $\delta >0$ such that
$$
 c(u(t,x))\geq \delta,
$$
for all  all $(t,x) \in [0,T^{*})\times \R$.

 By Lemma \ref{zz3}, we have the boundedness of  $\| \partial_t u (t) \|_{L^{\infty}}$  and $\| \partial_x u (t) \|_{L^{\infty}}$ on  $[0, T^{*}]$, which is contradiction 
to the fact that $T^{*} < \infty$. Hence we have (\ref{b22}).

Finally, we show that
\begin{eqnarray} \label{con}
\lim_{t\nearrow T^{*} } u(t,x_0)=\theta_0  \ \ \mbox{for some} \ x_0 \in \R.
\end{eqnarray}
Since  $u(t,x)$ is a monotone decreasing function of $t$ for fixed $x$, we have
\begin{align} \label{x1}
\lim_{t\nearrow T^{*} } \inf_{(s,y)\in [0,t)\times \R} u(s,y)=&\lim_{t\nearrow T^{*} } \inf_{x \in \R} u(t,x) \notag \\
=& \inf_{x \in \R} \lim_{t\nearrow T^{*} }u(t,x).
\end{align}
The right hand side of (\ref{x1}) is equivalent to (\ref{con}) since   $\lim_{t\nearrow T^{*} }u(t,x)$ is compactly supported.

\hfill $\Box$

\begin{Remark}
The same theorem as Theorem $\ref{mt2}$  holds for the equation (\ref{raeq}) for $0\leq \lambda \leq 2$.
\end{Remark}
 
\section{Proof of Theorem $\ref{mt3}$}

We define  functions $R_1,\ R_2$ and characteristic lines $x_{\pm}$ as (\ref{ri}) and (\ref{chl}) respectively.

By $u_1 (x) \not \equiv 0$, we have  $R_1 (0,\cdot )\not\equiv 0$ or $R_2 (0,\cdot )\not\equiv 0$. We assume that $R_1 (0, x_0 )\not =0$.

Suppose that $T^{*} =\infty$.

From 
\begin{eqnarray} \label{y0}
\dfrac{d}{dt} u(t,x_{-}(t))=R_2 (t, x_{-} (t)),
\end{eqnarray}
and the assumption $R_2 (0,x)\geq 0$ , Lemma \ref{zz1} yields that
$u (t, x_{-} (t))$ is a monotone increasing function with $t$. By (\ref{con2}), there exists a $\delta >0$ such that
\begin{eqnarray} \label{y1} 
c(u(t,x_{-} (t)))\geq \delta.
\end{eqnarray}
In the same way as in the proof of Lemma \ref{zz1}, we obtain
$$
R_2 (t, x_{+} (t))=0 \ \ \mbox{for} \ t\geq 0,
$$
with $x_{+}(0) \not\in \mbox{supp}\, R_2 (0,\cdot )$.

Since $R_2 (0,\cdot )$ is compactly supported, there exists $T_0 >0$ such that
\begin{eqnarray} \label{y2}
R_2 (t, x_{-} (t))=0 \ \ \mbox{for} \ t\geq T_0 .
\end{eqnarray}
By (\ref{y0}) and (\ref{y2}), we have
\begin{eqnarray} \label{y3}
u (0,x_{-}(0))\leq u(t, x_{-}(t))\leq C,
\end{eqnarray}
for some constant $C>0$.

By (\ref{inicon6}), (\ref{y1}) and (\ref{y3}), we obtain
\begin{eqnarray} \label{y4}
\delta \leq c(u(t,x_{-} (t)))\leq C_1\ \ \mbox{and} \ \ 
C_2 \leq c'(u(t,x_{-} (t)))\leq C_3
\end{eqnarray}
for some constant $C_j >0$ for $j=1,\ 2$ and $3$.

We chose $x_{-}(0)$ such that $R_1 (0,x_{-}(0))>0$.
 
Noting that $R_1 (t, x_{-} (t))>0$ for $t\geq 0$, by (\ref{y2}) and (\ref{y3}), $R_1 (t, x_{-}(t))$ satisfies that
\begin{eqnarray}
\dfrac{d}{dt} R_1(t,x_{-} (t))\geq  CR_1 (t,x_{-}(t))^2,\ \ \mbox{for} \ t\geq T_0.
\end{eqnarray}
From $R(T_0, x_{-} (T_0))>0$, $R(t,x_{-}(t))$ is going to infinity in finite time, which is contradiction to $T^{*} = \infty$.

Since the first estimate in (\ref{y4}) holds on $[0,T^{*})$,  we have
\begin{eqnarray*}  
\varlimsup_{t\nearrow T^{*}} \| \partial_t u(t)\|_{L^{\infty}}+\|\partial_x u(t) \|_{L^{\infty}}=\infty .
\end{eqnarray*}

 \hfill $\Box$ 
 
 \section*{Acknowledgments}
I would like to express my gratitude to Professor Keiichi Kato for many valuable comments and warm encouragement.

\end{document}